\newtheorem{theorem}{Theorem}
\newtheorem{proposition}{Proposition}
\newtheorem{lemma}{Lemma}
\newtheorem*{proposition*}{Proposition}
\newtheorem{definition}{Definition}
\newtheorem{example}{Example}
\newtheorem{remark}{Remark}
\newcommand\myfootnote[1]{
\renewcommand{\thefootnote}{}
\footnotetext{#1}
\def\thefootnote{\@arabic\c@footnote}
}
\renewcommand{\subsection}{\@startsection{subsection}{2}{0mm}{-\baselineskip}{-5pt}{\it \bf}}
\newcommand{\bigslant}[2]{{\raisebox{.2em}{$#1$}\left/\raisebox{-.2em}{$#2$}\right.}}
\title{Hamming spaces and locally matrix algebras}
\author{Oksana Bezushchak and Bogdana Oliynyk}
\thanks{The second author was partially supported by the grant for scientific researchers of the ``Povir u sebe'' Ukranian Foundation.}
\begin{document}
	
	\maketitle
	
	\address{Faculty of Mechanics and Mathematics,
		Taras Shevchenko National University of Kyiv,
		Volodymyrska, 60, Kyiv 01033, Ukraine \\ Department of Mathematics, National University
		of Kyiv-Mohyla Academy, Skovorody St. 2, Kyiv,
		04070, Ukraine                     }
	
	\email{bezusch@univ.kiev.ua, oliynyk@ukma.edu.ua}

\keywords{Keyword: unital algebra, Steinitz number, Hamming space, locally matrix algebra,  Cartan subalgebra}               

\subjclass{2010}{Mathematics Subject Classification: 06E25, 15A30, 16S50}




\begin{abstract}
We introduce an abstract definition of a Hamming space that generalizes standard Hamming spaces  $( \mathbb{Z}/ 2 \mathbb{Z})^n $. We classify countable locally standard Hamming spaces and show that each of them can be realized as the Boolean algebra of idempotents of a Cartan subalgebra of a locally matrix algebra.
\end{abstract}




\section*{Introduction}

The Hamming metric was introduced in information theory as the most common tool to measure the difference between binary strings of the same length. The (standard){ \it Hamming space} $H_n$ is the set  of all $n$-tuples $ x^n=(x_1,\dots,x_n), $
$ x_i \in \{0,1\} ,$ $ 1 \leq i \leq n ,$ with   the distance $ d_{H_n},$ that is defined between two $n$-tuples as the number of coordinates
where they differ.

There are different generalizations of finite metric Hamming space to infinite case, that are constructed as inductive limits of finite Hamming spaces \cite{Cameron}, \cite{OlSusch}, \cite{Ol}.

 In this paper we define an (abstract) Hamming space and introduce the operation of tensor product. We call a Hamming space locally standard (see the definition below) if locally it looks as $H_{n}.$
In the first part of the paper we prove that every countable locally standard Hamming space is isomorphic to an infinite tensor product of standard Hamming spaces $H_{n}.$ In the second part of the paper we realize countable locally standard Hamming spaces as Cartan subalgebras of locally matrix algebras (see \cite{BezOl}, \cite{BezOl_2}, \cite{Koethe}, \cite{Kurosh}) and discuss conjugacy of Cartan subalgebras.

\section{Steinitz numbers  }

A  {\it Steinitz} number \cite{ST}  is a infinite formal
product of the form
$$ \prod_{p\in \mathbb{P}} p^{r_p}, $$
where $ \mathbb{P} $ is the set of all primes, $ r_p \in  \mathbb{N} \cup \{0,\infty\} $ for all $p\in \mathbb{P}$.
We can define the product of two Steinitz numbers by the rule:
$$ \prod_{p\in \mathbb{P}} p^{r_p} \cdot  \prod_{p\in \mathbb{P}} p^{k_p}= \prod_{p\in \mathbb{P}} p^{r_p+k_p}, \  r_p, k_p \in  \mathbb{N} \cup \{0,\infty\},  $$
where we assume, that
$$r_p+k_p=\begin{cases}
r_p+k_p, & \text{if  $r_p < \infty$ and $k_p < \infty$, } \\
\infty, & \text{in other cases}
\end{cases}.$$

By symbol $ \mathbb{SN} $ we denote the set of all Steinitz numbers. Obviously, the set of all positive integers $ \mathbb{N} $ is a  subset of the set of all Steinitz numbers $ \mathbb{SN} $. The elements of the set $ \mathbb{SN} \setminus
\mathbb{N} $ are called {\it infinite Steinitz } numbers.

\section{Hamming spaces }

Recall that a Boolean algebra is a commutative algebra over the field
 $ \bigslant{ \mathbb{Z}}{2 \mathbb{Z}} $ satisfying the identity $x^2=x$.

 \begin{definition}
 	\label{Def1}
 	By a (unital) Hamming space $(H,r)$ (see \cite{Cameron}, \cite{OlSusch}, \cite{Ol}) we mean a Boolean algebra $H$ with $1$ and a rang function $r \ : H \to [0,1] $ such, that
 	 \begin{enumerate}
 		\item[$(1)$]	\label{as1}
 		$r(a)=0$ if and only if $a=0$; 	
 		\item[$(2)$]\label{as2}
 		$r(a)=1$ if and only if $a=1$;
 		\item[$(3)$]\label{as3}
 		if $a,b \in H$ and $ab=0$, then $r(a+b)=r(a)+r(b)$.
 	\end{enumerate}

 \end{definition}

\begin{remark}
Note, that if  $(H,r)$ is a Hamming space then the function $$d_H(a,b)= r(a - b), \qquad a, b \in H,$$ makes the Hamming space $(H,r)$ a metric space.
\end{remark}

\begin{example}
The Boolean algebra $H_n=  \big(\bigslant{ \mathbb{Z}}{2 \mathbb{Z}}\big)^n$ with the rang function   $$r_{H_n}(x_1,\ldots,x_n)= \frac{1}{n}(x_1 +\ldots+x_n)$$ for all $x_1, \ldots,x_n \in \{0,1\}$ satisfy the assumptions $(1),$ $(2),$ $(3)$ of Definition $\ref{Def1}.$ We call the  Hamming space $(H_n,r_{H_n})$ standard. For all $a,b \in H_n$ the corresponding distance $d_{H_n}(a,b)$ equals  the number of coordinates where $a$ and $b$ differ.
\end{example}

Let $\{0,1\}^{\mathbb{N}}$ be the set of all (right-) infinite $(0,1)$-sequences. Clearly, $\{0,1\}^{\mathbb{N}}$ is a Boolean algebra under coordinate-wise addition (modulo 2) and multiplication.

\begin{example}	
An infinite sequence $\mathbf{a}=(a_1,a_2,\ldots) \in \{0,1\}^{\mathbb{N}}$
is said to be {\it periodic} if there exists a natural number $k$
such that the equality $a_{i}=a_{i+k}$ holds for all $i \in
\mathbb{N}$. In this case the number $k$ is called a {\it period} of the sequence
$\mathbf{a}$.

Let $u$ be a Steinitz number. A periodic sequence $\mathbf{a}$  is called {\it $u$-periodic}  if  its minimal
period is a divisor of $u$.

Let $\mathcal{H}(u)$ be the set of all $u$-periodic sequences. Clearly $\mathcal{H}(u)$ is a Boolean subalgebra of $\{0,1\}^{\mathbb{N}}$.  The rang function $$r_{\mathcal{H}(u)}(a_1,a_2,\ldots)=\frac{1}{k}(a_1 +\ldots+a_k),$$
where $k$ is a period of the sequence $(a_1,a_2,\ldots) ,$ makes $(\mathcal{H}(u),r_{\mathcal{H}(u)})$ a Hamming space.
\end{example}

\begin{example}
	\label{Bez}
For a sequence  $\mathbf{a}=(a_1,a_2,\ldots) \in \{0,1\}^{\mathbb{N}}$ define its pseudorang function
$$\Tilde{r}(\mathbf{a})= \lim_{n \to \infty} \sup \frac{1}{n}(a_1+\ldots+a_n).$$ Then $I=\{\mathbf{a} \in \{0,1\}^{\mathbb{N}} \ | \ \Tilde{r}(\mathbf{a})=0\}$ is an ideal of the Boolean algebra $\{0,1\}^{\mathbb{N}}$.

Consider the Boolean algebra $B=\bigslant{\{0,1\}^{\mathbb{N}}}{I}$ and the rang function $$r_{B}(a+I)=\Tilde{r}(\mathbf{a}), \quad \mathbf{a} \in \{0,1\}^{\mathbb{N}}.  $$ The Hamming space $(B,r_B)$ is called the Besicovitch space or the Besicovitch--Hamming space (see \cite{BlanchFormKurk}, \cite{Vershik1}).

\end{example}

\begin{definition}
	By a (non-unital) Hamming space $(H,r)$ (see \cite{Cameron}, \cite{Olijnyk}) we mean a  Boolean algebra $H$  with a rang function $r \ : H \to [0,\infty) $ such that
	\begin{enumerate}
		\item[$(1)$]
		$r(a)=0$ if and only if $a=0$; 	
		\item[$(2)$]
		if $a,b \in H$ and $ab=0$, then $r(a+b)=r(a)+r(b)$.
	\end{enumerate}	
\end{definition}

Similarly to the unital case,  we can define a metric $$d_H(a,b)= r(a - b), \qquad a, b \in H,$$ that  makes  $(H,r)$  a metric space.

\begin{example}
	Let $X$ be an infinite set and let $H$ be the non unital Boolean algebra of finite subsets of $X$, including the empty one. The rang function $r(a)=\# a$, $a \in H$, makes $(H,r)$ a non unital Hamming space (see \cite{Cameron}, \cite{Olijnyk}).
\end{example}

\begin{remark}
	For an arbitrary Hamming space $(H,r)$ and a nonzero element $h \in H$ consider the ideal $hH$ and the rang function $r_h \ : \ hH \to [0,1]$, $$r_h(a)=\frac{r(a)}{r(h)} , \qquad a \in hH.$$ Clearly, $(hH, r_h)$ is a unital Hamming space with the identity element $h$.
\end{remark}

\section{Tensor product of Hamming spaces}

The purpose of the following proposition is to define tensor product of Hamming spaces.

\begin{proposition}
	Let $(H_1,r_1)$, $(H_2,r_2)$ be Hamming spaces. Then there exists a unique rang function $r$ on $H=H_1\otimes_{\mathbb{Z}} H_2$ such that $r(a \otimes b)=r_1(a) r_2(b)$ for arbitrary elements $a \in H_1$, $b \in H_2$.
\end{proposition}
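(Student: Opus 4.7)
The plan is to build $r$ by localizing to finite Boolean subalgebras. Since $H_1$ and $H_2$ are $\mathbb{Z}/2\mathbb{Z}$-vector spaces, the tensor product $H=H_1\otimes_{\mathbb{Z}} H_2$ coincides with $H_1\otimes_{\mathbb{Z}/2\mathbb{Z}} H_2$ and carries the natural commutative $\mathbb{Z}/2\mathbb{Z}$-algebra structure $(a\otimes b)(c\otimes d)=ac\otimes bd$. First I would verify that $H$ is itself a Boolean algebra: simple tensors satisfy $(a\otimes b)^{2}=a\otimes b$, and in characteristic $2$ the cross-terms $(a_i\otimes b_i)(a_j\otimes b_j)+(a_j\otimes b_j)(a_i\otimes b_i)=2\,a_ia_j\otimes b_ib_j$ vanish from $\bigl(\sum_i a_i\otimes b_i\bigr)^{2}$, so $x^{2}=x$ for every $x\in H$.

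Next I would handle existence. Any $x\in H$ is a finite sum of simple tensors, hence lies in $A\otimes B$ for some finitely generated, and therefore finite, unital Boolean subalgebras $A\subseteq H_1$ and $B\subseteq H_2$. Let $\{e_1,\dots,e_m\}$ and $\{f_1,\dots,f_n\}$ be the sets of atoms of $A$ and $B$. By exactness of tensor products over the field $\mathbb{Z}/2\mathbb{Z}$, the inclusion $A\otimes B\hookrightarrow H$ is injective, and $\{e_p\otimes f_q\}$ is a $\mathbb{Z}/2\mathbb{Z}$-basis of $A\otimes B$. Orthogonality of the atoms gives $(e_p\otimes f_q)(e_{p'}\otimes f_{q'})=0$ for $(p,q)\neq(p',q')$, so $x$ has a unique representation $x=\sum_{(p,q)\in S_x}e_p\otimes f_q$ as a sum of pairwise orthogonal simple tensors, and I set
\[ r(x)\;=\;\sum_{(p,q)\in S_x}r_1(e_p)\,r_2(f_q). \]

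The main obstacle is to show that this value does not depend on the choice of $(A,B)$. Two choices always admit a common refinement: if $A\subseteq A'$ and $B\subseteq B'$ are unital finite subalgebras with atoms $\{e'_k\}$ and $\{f'_l\}$, then each atom $e_p$ of $A$ splits as $e_p=\sum_{k\in K_p}e'_k$ into a disjoint sum of atoms of $A'$, and similarly $f_q=\sum_{l\in L_q}f'_l$. Axiom $(3)$ of Definition \ref{Def1} applied to $r_1$ and $r_2$ yields $r_1(e_p)=\sum_{k\in K_p}r_1(e'_k)$ and $r_2(f_q)=\sum_{l\in L_q}r_2(f'_l)$, so $r_1(e_p)\,r_2(f_q)$ distributes over the refinement and both prescriptions assign the same value to $x$.

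It remains to verify the three axioms and uniqueness. For $(1)$, the linear independence of $\{e_p\otimes f_q\}$ together with the strict positivity $r_1(e_p),\,r_2(f_q)>0$ on nonzero atoms shows that $r(x)=0$ forces $S_x=\emptyset$, i.e.\ $x=0$. For $(2)$, expanding $1_H=1_{H_1}\otimes 1_{H_2}=\sum_{p,q}e_p\otimes f_q$ and using positivity of all summands, $r(x)=1=r_1(1)r_2(1)$ forces $S_x$ to be the full index set, hence $x=1_H$. For $(3)$, if $ab=0$ then inside a common $A\otimes B$ containing both elements the sets $S_a$ and $S_b$ are disjoint and $S_{a+b}=S_a\cup S_b$, giving $r(a+b)=r(a)+r(b)$ termwise. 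Uniqueness is automatic: any rank function $r'$ satisfying $r'(a\otimes b)=r_1(a)r_2(b)$ must, by axiom $(3)$, equal the above sum on every orthogonal decomposition $\sum e_p\otimes f_q$, and therefore coincides with $r$.
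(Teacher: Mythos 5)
Your proof is correct and follows essentially the same route as the paper: you define $r$ on a finite orthogonal decomposition (your atoms of finite unital subalgebras are exactly the paper's covering sets $E\in E(H_i)$), prove independence of the choice via a common refinement using axiom $(3)$, and then verify the rang axioms and uniqueness in the same way. The only difference is cosmetic — you make explicit the linear independence of $\{e_p\otimes f_q\}$ and the uniqueness argument, which the paper leaves implicit.
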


\begin{proof}
	Let $S(H)$ be the set of all nonempty finite subsets  of $H \setminus \{0\}$. Let
	\begin{multline*}
	E(H)=\{A \in S(H) \mid A=\{a_1, \ldots, a_r \}, \ a_i \ne 0, \ a_i a_j =0 \text{ for } i\ne j,\\ 1 \le i,j \le r \}.
	\end{multline*}
	
	We say that a set $X \in S(H)$ is covered by a set $E \in E(H)$ if for arbitrary elements $x \in X$, $e \in E$ we have $xe=e$ or $0$ and $$x=\sum_{e \in E,  \, xe=e}e.$$
	
	Let $X=\{a_1, \ldots, a_r\} \in S(H).$ Let the set $E$ consists of all nonzero products $b_1 \ldots b_r$, where $b_i=a_i$ or $1-a_i$. Then $E \in E(H) $ and $E$ covers $X$.
	
	For an arbitrary element $x \in H_1\otimes_{\mathbb{Z}} H_2$ there exist $E_1=\{e_1, \ldots, e_n\} \in E(H_1)$ and $E_2=\{f_1, \ldots, f_m\} \in E(H_2)$ such that $x \in (\text{Span } E_1) \otimes (\text{Span } E_2)$, $$x= \sum _{1\le i \le n, \, 1\le j \le m } \alpha_{ij} e_i \otimes f_j, \qquad \alpha_{ij}=0 \text{ or } 1.$$
	In this case we say that the element $x$ is covered by subsets $E_1$ and $E_2.$
	
	Define $$r_{E_1, E_2}(x)=\sum _{1\le i \le n, \, 1\le j \le m } \alpha_{ij} \ r_1(e_i) r_2(f_j).$$
	
	Let $E_1' \in E(H_1)$ and $E_2' \in E(H_2)$ such that $E_1$ and $E_2$ are covered by $E_1'$ and $E_2'$ respectively. Then
	$$x=  \sum_{1\le i \le n, \, 1\le j \le m } \alpha_{ij} \ e_p' \otimes f_q' ,$$ where the summation is done over all $e_p' \in E_1'$, $e_q' \in E_2'$ such that $e_ie_p'=e_p'$, $f_jf_q'=f_q'$. Hence
	$$r_{E_1', E_2'}(x)=\sum _{1\le i \le n, \, 1\le j \le m } \alpha_{ij} r_1(e_p') r_2(f_q'). $$ But $$r_1(e_i)= \sum_{e_i e_p^{'}=e_p^{'}} r_1(e_p^{'}) \ \ \text{ and } \ \ r_2(f_j)= \sum_{f_j f_q^{'}=f_q^{'}} r_2(f_q^{'}).$$
	This implies that $r_{E_1, E_2}(x)=r_{E_1^{\,'}, E_2^{\,'}}(x).$
	
	We claim that the function $r(x)=r_{E_1, E_2}(x)$ is well defined. Let $E_1, E_1^{\,'}\in E(H_1);$ $E_2, E_2^{\,'}\in E(H_2) $ and $$x \in (\text{Span }(E_1)\otimes \text{Span }(E_2)) \cap (\text{Span }(E_1^{\,'})\otimes \text{Span }(E_2^{\,'})).$$
	There exists $E_1^{\,''}\in E(H_1),$ $E_2^{\,''}\in E(H_2)$ such that $E_1$ and $E_1^{\,'}$ are both covered by $E_1^{\,''};$ $E_2$ and $E_2^{\,'}$ are both covered by $E_2^{\,''}.$  Then $$ r_{E_1, E_2}(x)=r_{E_1^{\,''}, E_2^{\,''}}(x)=r_{E_1^{\,'}, E_2^{\,'}}(x) .$$ 
	
	Define $r(x)=r_{E_1, E_2}(x).$ Let us show that  $r(x)$ is a rang function.
	
	Let $E_1=\{ e_1,\ldots, e_n \} \in E(H_1),$ $E_2=\{ f_1,\ldots, f_m \} \in E(H_2),$ $$x=\sum_{1\le i \le n, \, 1\le j \le m }   \alpha_{ij} \ e_i \otimes f_j, \ \alpha_{ij}=0 \text{ or } 1.$$ Clearly,
	$$r(x)=\sum_{1\le i \le n, \, 1\le j \le m } \alpha_{ij} \ r_1(e_i) r_2(f_j)\geq 0.$$ If $x\neq 0$ then there exist indices $i,j$ such that $\alpha_{ij}\neq 0.$ This implies $r(x) \geq r_1(e_i) \, r_2(f_j)> 0.$
	
	Also
$$	r(x)=\sum_{1\le i \le n, \, 1\le j \le m } \alpha_{ij} r_1(e_i) r_2(f_j)\leq \sum_{1\le i \le n, \, 1\le j \le m }  r_1(e_i) r_2(f_j)= $$ $$ =r_1(e_1+\cdots +e_n) \, r_2(f_1+\cdots + f_m)\leq 1.$$ The equality is achieved only when $e_1+\cdots +e_n=1,$ $f_1+\cdots + f_m=1$ and all $\alpha_{ij}=1,$ in which case $x=1$ in $H.$ Finally, let $x,y \in H,$ $xy=0.$ Let $$E_1=\{ e_1,\ldots, e_n \} \in E(H_1), \ E_2=\{ f_1,\ldots, f_m \} \in E(H_2)$$ cover  both $x$ and $y.$ Let  $$x=\sum_{1\le i \le n, \, 1\le j \le m }  \alpha_{ij} e_i \otimes  f_j, \ y=\sum_{1\le i \le n, \, 1\le j \le m }  \beta_{ij} e_i \otimes  f_j, \text{ then} $$ $$x+y=\sum_{1\le i \le n, \, 1\le j \le m }  (\alpha_{ij}+\beta_{ij}) e_i \otimes  f_j. $$ From $xy=0$ it follows that $\alpha_{ij} \beta_{ij}=0$ for all $i,j$ and therefore $\alpha_{ij}+\beta_{ij}=0$ or $1.$ Hence $$r(x+y) = \sum_{1\le i \le n, \, 1\le j \le m }  (\alpha_{ij}+\beta_{ij}) r_1(e_i) r_2 (f_j)=r(x)+r(y).$$ This completes the proof of the Proposition.\end{proof}

It is easy to see that $H_n \otimes H_m \cong H_{n m} .$

\begin{lemma}
\label{lemma_1}
Let $H_n \subset H_s.$ Then there is a subspace   $1\in H^{\,'}\subset H_s$  such that $$H_s = H_n H^{\,'} \cong H_n \otimes H^{\,'}, \  H^{\,'}\cong H_{s/n}.$$
\end{lemma}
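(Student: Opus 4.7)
The plan is to build $H'$ by choosing an ``orthogonal direction'' to $H_n$ inside $H_s$ via atoms. First I would record the constraint $n\mid s$: let $e_1,\dots,e_n$ be the atoms of $H_n$ and $g_1,\dots,g_s$ the atoms of $H_s$. Each $e_i$, being a nonzero idempotent of $H_s$, is a sum of some subset of the $g_k$'s. Since the induced rank on $H_n$ must agree with $r_{H_n}$, and $r_{H_s}(e_i)=1/n$ while each $g_k$ has $r_{H_s}(g_k)=1/s$, every $e_i$ is the sum of exactly $s/n$ atoms of $H_s$. Writing $m=s/n$, I relabel these atoms as $e_{i,1},\dots,e_{i,m}$, so that
\[
e_i = \sum_{j=1}^{m} e_{i,j},\qquad \sum_{i=1}^n\sum_{j=1}^m e_{i,j}=1.
\]

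Next I construct $H'$ by fixing (arbitrarily) the indexing above and setting
\[
f_j := \sum_{i=1}^{n} e_{i,j}\qquad (j=1,\dots,m).
\]
The $f_j$ are pairwise orthogonal nonzero idempotents of $H_s$ with $f_1+\dots+f_m=1$, so the Boolean subalgebra $H'\subseteq H_s$ they generate has exactly $m$ atoms and is isomorphic as a Boolean algebra to $H_m$. Moreover $r_{H_s}(f_j)=n\cdot(1/s)=1/m$, matching the standard rank on $H_m$, so $H'\cong H_{s/n}$ as Hamming spaces.

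Then I verify the two factorization claims. By construction $e_i f_j = e_{i,j}$, and since the $e_{i,j}$ exhaust the atoms of $H_s$, the products $H_n H'$ span all of $H_s$. For the tensor identification, the natural map $H_n\otimes H'\to H_s$, $e_i\otimes f_j \mapsto e_i f_j = e_{i,j}$, sends the $nm=s$ ``atom pairs'' bijectively onto the atoms of $H_s$, hence is an isomorphism of Boolean algebras. To see it respects the rank function, note that on atoms
\[
r_{H_s}(e_i f_j) = \tfrac{1}{s} = \tfrac{1}{n}\cdot\tfrac{1}{m} = r_{H_n}(e_i)\, r_{H'}(f_j),
\]
so the isomorphism matches the tensor rank from the preceding Proposition on generators, and therefore everywhere by additivity on orthogonal sums.

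The only real content of the argument is the rank-counting step that forces $n\mid s$ and pins down the multiplicity of each atom of $H_n$ as a sum of atoms of $H_s$; once that uniform decomposition is in hand, the construction of $H'$ is a purely combinatorial choice and the tensor factorization follows automatically. I expect the mildly delicate point to be checking that the rank on $H_s$ coincides with the tensor rank on all of $H_n\otimes H'$, not just on simple tensors — but this is immediate from uniqueness in the preceding Proposition, since both rank functions take the prescribed values $1/(nm)$ on the atoms $e_i\otimes f_j$.
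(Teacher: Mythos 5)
Your proposal is correct and follows essentially the same route as the paper: force $n\mid s$ by comparing the rank $1/n$ of each atom of $H_n$ with the atomic rank $1/s$ in $H_s$, split each $e_i$ into $s/n$ atoms $e_{i,j}$ of $H_s$, and take $H'$ generated by the column sums $f_j=\sum_i e_{i,j}$. Your extra verification that the rank on $H_s$ agrees with the tensor rank is a detail the paper leaves implicit, but the construction is identical.
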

\begin{proof}
The Boolean algebra $H_n$ contains $n$ orthogonal idempotents $e_1,$ $\ldots,$ $e_n,$ $1= \sum_{i=1}^{n} e_i,$ each of them has rang $ 1/n.$ Each element $e_i$ in $H_s$ has rang $r/s,$ $0\leq r \leq s.$ Hence $$\frac{1}{n} = \frac{r}{s}, $$ so $s=nr,$  which implies that $n|s.$  

An arbitrary element of $H_s$ of rang $r/s$ is a sum of $r$ orthogonal idempotents, each of rang $1/s.$ Let $$e_i = \sum_{j=1}^{s/n}  e_{i  j}, \ i=1, \ldots, n, \ \ e_j^{'} = \sum_{i=1}^{n}  e_{i  j}, \ \  j=1, \ldots, s/n.$$ The subalgebra $H^{\,'}$ of $H_s$ generated by $e_j^{'},$  $1\leq j \leq s/n,$  has the claimed properties. \end{proof}

\begin{definition}	\label{loc stand}  We say that a Hamming space $(H,r)$ is locally standard if an arbitrary finite collection of elements $a_1,$ $\ldots,$ $a_n \in H$ is contained in a subspace $H^{\,'}\subset H$ that is isomorphic to $H_m$ for some $m\geq 1.$
\end{definition}

\begin{example}
$u$-periodic Hamming space $\mathcal{H}(u)$ is locally standard for an arbitrary Steinitz number $u$. Indeed, for any finite collection of elements $a_1, \ldots, a_n \in \mathcal{H}(u)$  there exists a positive integer $m$ such that all sequences  $a_1, \ldots, a_n$ are periodic with period $m$ and $m | u$. Then  $a_1,\ldots, a_n $ are contained in a subspace $\mathcal{H}(u)$ that is isomorphic to $H_m$.
\end{example}

\begin{example} Note that the Besicovitch space $(X_B, r_B)$ is not locally standard because  we can construct  $x = (x_1, x_2, \ldots) \in X_B$ such that $ r_B(x)$ is irrational.
	
	Let $\{0,1\}^{\mathbb{N}}_p$ be the subset of $\{0,1\}^{\mathbb{N}}$ that consists of periodic sequences.  Clearly, $$\{0,1\}^{\mathbb{N}}_p \cap I= \{0\}$$ (see Example $\ref{Bez}$), hence $B_p=\{0,1\}^{\mathbb{N}}_p$ can be viewed as Hamming subspace of the Besicovitch space $(B,r_b)$, $$B_p=\cup_{u\in  \mathbb{SN}} \mathcal{H}(u).$$ The Hamming space $B_p$ is locally standard.
\end{example}

\begin{theorem} \label{TH_1} Let $H$ be a locally standard  countable Hamming space. Then $H\cong \otimes_{i=1}^{\infty} \ H_{p_i},$ each $p_i$ is a prime number.
\end{theorem}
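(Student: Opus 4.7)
The plan is to exhaust $H$ by a nested chain of standard subspaces and then split each successive jump into single-prime factors.

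First, enumerate the countable set $H$ as $a_1, a_2, \ldots$. Using local standardness (Definition \ref{loc stand}), I inductively construct a chain
\[
H^{(1)} \subset H^{(2)} \subset \cdots \subset H,
\]
with $H^{(n)} \cong H_{m_n}$ for some natural number $m_n$, arranging at the $n$-th stage that $H^{(n)}$ contains both $a_n$ and the (finite) set $H^{(n-1)}$. Taking unions gives $H = \bigcup_{n} H^{(n)}$.

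At each step, Lemma \ref{lemma_1} applied to $H^{(n)} \subset H^{(n+1)}$ yields $m_n \mid m_{n+1}$ together with a unital subalgebra $K_n \subset H^{(n+1)}$ with $K_n \cong H_{m_{n+1}/m_n}$ and $H^{(n+1)} \cong H^{(n)} \otimes K_n$, the inclusion $H^{(n)} \hookrightarrow H^{(n+1)}$ being identified with $x \mapsto x \otimes 1$. Factoring each integer $m_{n+1}/m_n$ into primes $q_{n,1} \cdots q_{n,t_n}$ and applying the isomorphism $H_{ab} \cong H_a \otimes H_b$ (noted just before Lemma \ref{lemma_1}) iteratively inside $K_n$, I rewrite
\[
H^{(n)} \cong H_{p_1} \otimes H_{p_2} \otimes \cdots \otimes H_{p_{s_n}}
\]
for a non-decreasing sequence $s_n$ and a single running enumeration $p_1, p_2, \ldots$ of primes. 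The infinite tensor product $\otimes_{i=1}^{\infty} H_{p_i}$ in the statement is to be interpreted as the direct limit of the finite tensor products $H_{p_1} \otimes \cdots \otimes H_{p_N}$ along the unital embeddings $y \mapsto y \otimes 1$, and under this interpretation the union $\bigcup_n H^{(n)} = H$ is precisely that direct limit.

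The main obstacle is the bookkeeping at the limit: one must ensure that the successive isomorphisms $H^{(n+1)} \cong H^{(n)} \otimes K_n$ fit together so that the inclusions in the chain actually match the stabilization maps defining the infinite tensor product. This reduces to checking that the isomorphism supplied by Lemma \ref{lemma_1} carries the ambient inclusion to $x \mapsto x \otimes 1$, which is transparent from the atomic construction of $K_n$ in the lemma's proof: each atom $e_i$ of $H^{(n)}$ decomposes as $\sum_{j} e_{ij} = e_i \cdot \bigl(\sum_{j} e_j'\bigr) = e_i \otimes 1$ under the identification $H^{(n+1)} \cong H^{(n)} \otimes K_n$. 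A secondary point worth noting is that the infinite tensor product has not been defined explicitly earlier in the paper, so the statement of the theorem should be read with the above direct-limit interpretation; different choices of prime enumeration then produce canonically isomorphic limits since the only invariant involved is the multiset of primes, equivalently the Steinitz number $\sup_n m_n$.
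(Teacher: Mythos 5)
Your proposal is correct and follows essentially the same route as the paper: exhaust $H$ by an ascending chain of standard subspaces, use Lemma \ref{lemma_1} to split each inclusion as a tensor product with a standard complement, and factor via $H_{ab} \cong H_a \otimes H_b$. You merely spell out two points the paper leaves implicit (the construction of the chain from countability plus local standardness, and the compatibility of the inclusions with the maps $x \mapsto x \otimes 1$ defining the infinite tensor product), and both are handled correctly.
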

\begin{proof} There exists an ascending chain of subspaces $$1 \in  H_1 \subset H_2 \subset \cdots , \quad \bigcup_{i=1}^{\infty} H_i =H,$$ and each $H_i$ is isomorphic to a standard   Hamming space. By Lemma \ref{lemma_1} there exists a  standard subspace $H_i^{\,'} \subset H_{i+1},$ $i \geq 1,$ such that $H_i \otimes H_i^{\,'} \cong  H_{i+1}.$  Let $H_1^{\,'}= H_1.$ Then $H \cong \otimes_{i=1}^{\infty} H_i^{\,'} .$ Suppose now that $H_i^{\,'} \cong H_{n_i}.$ If $n_i\geq 2$ and $n_i=p_1 \cdots p_k$ is a prime decomposition of $n_i$ then $H_{n_i} \cong H_{p_1} \otimes \cdots \otimes H_{p_k}.$ This implies the assertion of the Theorem. \end{proof}


\begin{definition}	\label{Steinitz numbers}  Let $H$ be a locally standard Hamming space. Let $$ D(H) = \{ n\geq 1 \, | \, H^{\,'} \subset H, H^{\,'} \cong H_n  \}.$$ The least common multiple of the set $ D(H)$ is called the Steinitz  number  of  $H$ and denoted as $\mathbf{st}(H).$
\end{definition}

Let $ H^{\,'},$  $H^{\,''}$ be   locally standard Hamming spaces. It is easy to see that $H^{\,'} \otimes H^{\,''}$ is  locally standard and $$\mathbf{st}( H^{\,'} \otimes H^{\,''} ) =\mathbf{st}( H^{\,'}) \cdot \mathbf{st}( H^{\,''}) . $$  If $H= \otimes_{i=1}^{\infty} H_{p_i}$ is a decomposition of Theorem  \ref{TH_1} then $$\mathbf{st}( H)= \prod_{i=1}^{\infty} p^{s_i}_i , $$ where $s_i$ is a number of copies of $H_{p_i}$ in the decomposition of $H$.   In \cite{Sushch2} it was shown that countable locally standard Hamming space are isomorphic if and only if $\mathbf{st}( H^{\,'}) =\mathbf{st}( H^{\,''}) .$ This fact also easily follows from Theorem   \ref{TH_1}.

\begin{example}
The Steinitz number of the $u$-periodic Hamming space $\mathcal{H}(u)$ is equal to $u$. Moreover, if  	$u = \prod_{i=1}^{\infty} p_i^{s_i}$ then $$\mathcal{H}(u) = \otimes_{i=1}^{\infty} H_{p_i},$$
where  the number of copies of $H_{p_i}$ in the tensor products $\otimes_{i=1}^{\infty} H_{p_i}$ is equal to $s_i$.
\end{example}

\begin{example}
	$\mathbf{st}( B_p) =\prod_{i=1}^{\infty} p_i^{\infty}$, where the product is taken over all prime numbers $p_i$.
\end{example}
	
\section{Cartan subalgebras of locally matrix algebras}

Let $\mathbb{F}$ be an algebraically closed field. An associative  $\mathbb{F}$-algebra $A$ with a unit $1$ is said to be a  {\it unital locally matrix algebra} (see \cite{Kurosh}) if for an arbitrary finite collection of elements $a_1, \ldots, a_s \in A$ there exists a subalgebra $A' \subset A$ such that $1,a_1, \ldots, a_s \in A'$ and $A'  \cong M_n(\mathbb{F})$ for some $n \geq 1$.

For a unital locally matrix algebra $A$ let $D(A)$ be the set of all positive integers $n$ such that there exists a subalgebra $A'$, $1 \in A' \subset A$, $A' \cong M_n(\mathbb{F})$. The least common multiple of the set $D(A)$ is called the {\it Steinitz number} $\mathbf{st}(A)$ of the algebra $A$ (see \cite{BezOl}).

G.~K\"{o}the \cite{Koethe} showed that every countable dimensional unital locally matrix algebra is isomorphic to an infinite tensor product of finite dimensional matrix algebras.

J.~G.~Glimm \cite{Glimm} proved that every countable dimensional unital locally matrix algebra is uniquely determined by its Steinitz number.

For unital locally matrix algebras of uncountable dimensions the theorems above are no longer  true (see \cite{BezOl}, \cite{BezOl_2}, \cite{Kurosh}). 

{\it In what follows we consider only countable dimensional locally matrix algebras}.

For an element $a \in A$ choose a subalgebra $A'\subset A$ such that $1,a \in A'$, $A'  \cong M_n(\mathbb{F})$. Let $r_{A'}(a)$ be the rang of the matrix $a$ in $ M_n(\mathbb{F})$. As shown by Kurochkin \cite{Kurochkin} (see also \cite{Morita}) the ratio $r(a)=\frac{1}{n}r_{A'}(a)$ does not depend on the choice of the subalgebra $A'$. We call $r(a)$ the {\it relative rang} of the element $a$. Clearly, $r(a)=0$ if and only if $a=0$. If $a$ is an idempotent (we call $0$ and $1$ idempotents as well) then $r(a)=1$ if and only if $a=1$. Moreover, if $a$ and $b$ are orthogonal idempotents then $r(a+b)=r(a)+ r(b)$.

Let $C$ be a commutative subalgebra of a locally matrix algebra $A$, $1 \in C$. Let $E(C)$ be the set of all idempotents from $C$ (including $0$ and $1$). For idempotents $e,f \in E(C)$   let $ef$, $e+f-2ef$ be their Boolean product and Boolean sum respectively. The Boolean algebra $ E(C)$ with the  relative rang function $r:\ E(C) \to [0,1]$ make $ E(C)$ a Hamming space.

A subalgebra $H$ of the matrix algebra $ M_n(\mathbb{F})$ is called a {\it Cartan subalgebra} if $H \cong \underbrace{\mathbb{F} \oplus \ldots \oplus \mathbb{F}}_{n}$, in other words, $H$ is spanned by $n$ pairwise orthogonal idempotents. It is well known that every Cartan  subalgebra is conjugate of the diagonal subalgebra of  $ M_n(\mathbb{F})$.


Let $1\in A_1 \subset A_2 \subset \ldots$ be an ascending chain of matrix subalgebras such that $A=\cup_{i=1}^{\infty}A_i$. In each $A_i$ choose a Cartan subalgebra $H_i$ so that $1 \in H_1 \subset H_2 \subset \ldots$.  We call
$$H= \bigcup_{i=1}^{\infty}H_i$$ a {\it general Cartan subalgebra} of $A$. As above, $r: \ A \to [0,1]$ is a relative rang function. Then $( E(H),r)$ is a locally standard Hamming space.

A subalgebra $H\subset A$ is called a {\it Cartan subalgebra} if there exists a decomposition $A= \otimes_{i=1}^{\infty}A_i$ into a product of finite dimensional matrix algebras and Cartan subalgebras $H_i$ in $A_i$ such that $H= \otimes_{i=1}^{\infty}H_i$.

\begin{theorem}
	\label{Thm2}
	Any two Cartan subalgebras of $A$ are conjugate via an automorphism of $A$.
\end{theorem}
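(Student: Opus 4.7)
\medskip

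\noindent\textbf{Proof plan.} The strategy is a back-and-forth construction, driven by the Hamming-space classification of Theorem~\ref{TH_1} and by Skolem--Noether at each finite stage. Start by observing that, by the definition of a Cartan subalgebra, there exist tensor decompositions $A = \bigotimes_{i=1}^{\infty} A_i = \bigotimes_{j=1}^{\infty} B_j$ with $H = \bigotimes H_i$, $K = \bigotimes K_j$, and $H_i \subset A_i$, $K_j \subset B_j$ classical Cartans. Consequently $E(H)$ and $E(K)$ are countable locally standard Hamming spaces, and their Steinitz numbers both equal $\mathbf{st}(A)$. Hence Theorem~\ref{TH_1} and the subsequent isomorphism criterion yield a Hamming-space isomorphism $\sigma : E(H) \to E(K)$, which extends uniquely to an $\mathbb{F}$-algebra isomorphism $\tilde\sigma : H \to K$, since each $H^{(n)} := H_1 \otimes \cdots \otimes H_n$ is $\mathbb{F}$-spanned by its idempotents.

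Set $A^{(n)} := A_1 \otimes \cdots \otimes A_n$ and $B^{(m)} := B_1 \otimes \cdots \otimes B_m$, so that $A = \bigcup A^{(n)} = \bigcup B^{(m)}$, with $H^{(n)} = H \cap A^{(n)}$ a Cartan of $A^{(n)}$ (and the analogous statement for $K$). Fix an enumeration $a_1, a_2, \ldots$ of a countable $\mathbb{F}$-basis of $A$. The plan is to build inductively two nested sequences of finite-dimensional unital matrix subalgebras $C_1 \subset C_2 \subset \cdots$ and $D_1 \subset D_2 \subset \cdots$ of $A$, together with isomorphisms $\varphi_n : C_n \to D_n$, such that: $H \cap C_n$ is a Cartan of $C_n$ and $K \cap D_n$ is a Cartan of $D_n$; $\varphi_n$ sends $H \cap C_n$ onto $K \cap D_n$ in agreement with $\tilde\sigma$; $\varphi_{n+1}$ extends $\varphi_n$; and by alternating the role of domain and range, we guarantee $a_n \in C_n$ and $a_n \in D_n$ in the limit. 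At each odd step we absorb $a_n$ and $C_{n-1}$ into some $A^{(N)}$ (taking $C_n := A^{(N)}$, so $H \cap C_n = H^{(N)}$ is automatically Cartan); at each even step we do the symmetric thing on the range side with some $B^{(M)}$. Passing to the limit then gives $\Phi = \lim \varphi_n$, an automorphism of $A$ with $\Phi(H) = K$.

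The main obstacle is the inductive extension. Given $\varphi_n : C_n \to D_n$ and an enlargement $C_{n+1} \supset C_n$ with $H \cap C_{n+1}$ Cartan, one must produce $D_{n+1} \supset D_n$ in $A$ of matching dimension, with $K \cap D_{n+1}$ Cartan, together with a compatible extension of $\varphi_n$. Writing $C_{n+1} = C_n \otimes_{\mathbb{F}} E$ via the double centralizer theorem (where $E$ is the centralizer of $C_n$ in $C_{n+1}$), one has $H \cap C_{n+1} = (H \cap C_n) \otimes (H \cap E)$; we then need to locate, inside the centralizer of $D_n$ in $A$, a matrix subalgebra $E' \cong E$ for which $(K \cap D_n) \otimes (K \cap E')$ is a Cartan of $D_n \otimes E'$. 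The existence of such $E'$ relies on the fact that $\mathbf{st}(E(K)) = \mathbf{st}(A)$, which supplies enough tensor factors of the $K$-decomposition commuting with $D_n$, while Skolem--Noether classifies unital embeddings of the relevant matrix algebras and lets us splice $\varphi_n$ with any $\tilde\sigma$-compatible isomorphism $E \to E'$ into the desired $\varphi_{n+1}$.
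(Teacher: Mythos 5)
Your plan founders at the inductive extension step, and in two distinct ways. First, the asserted splitting $H\cap C_{n+1}=(H\cap C_n)\otimes(H\cap E)$, where $E$ is the centralizer of $C_n$ in $C_{n+1}$, is simply not a consequence of ``$H\cap C_n$ is a Cartan of $C_n$''. Take $C_{n+1}=\mathrm{End}(V_1\otimes V_2)\cong M_4(\mathbb{F})$, $C_n=\mathrm{End}(V_1)\otimes 1$, and let the Cartan be the diagonal with respect to the basis $v_1\otimes w_1,\ v_1\otimes w_2,\ v_2\otimes w_1,\ v_2\otimes(w_1+w_2)$: then its intersection with $C_n$ is a Cartan of $C_n$, but its intersection with the centralizer $1\otimes\mathrm{End}(V_2)$ is just $\mathbb{F}1$, so the right-hand side is $2$-dimensional while the left-hand side is $4$-dimensional. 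In your induction the subalgebras $C_n$, $D_n$ are not tensor factors of the ambient decompositions but images produced at earlier splicing steps, so nothing guarantees they sit ``compatibly'' inside the next $A^{(N)}$ or $B^{(M)}$. To make this step honest you would have to carry a stronger invariant (e.g.\ that $C_n$ and $D_n$ admit systems of matrix units whose diagonal spans $H\cap C_n$, resp.\ $K\cap D_n$, and which normalize $H$, resp.\ $K$) and prove a lemma deducing the tensor splitting of the Cartan from that invariant; neither appears in your plan. Second, insisting that every $\varphi_n$ agree pointwise with the pre-chosen $\tilde\sigma$ overconstrains the construction: when you enlarge $C_n$ to $C_n\otimes E$, the idempotents $\tilde\sigma(H\cap E)\subset K$ must land in the centralizer of $D_n=\varphi_n(C_n)$ for the splice to be an algebra map, and there is no reason they commute with all of $D_n$ (they only commute with $K\cap D_n$). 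The standard intertwining arguments construct the diagonal identification together with the algebra maps rather than prescribing it in advance, and Skolem--Noether alone does not repair this.

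Beyond these gaps, you are working much harder than the statement requires: in this paper a Cartan subalgebra is by definition of the form $H=\otimes_i H_i$ attached to a decomposition $A\cong\otimes_i M_{n_i}(\mathbb{F})$. The paper's proof simply refines both given decompositions so that all $n_i$, $m_i$ are prime, uses $\mathbf{st}(A)=\prod_i n_i=\prod_i m_i$ to renumber so that $n_i=m_i$, and then conjugates factorwise by automorphisms $\varphi_i$ of $M_{n_i}(\mathbb{F})$ carrying one Cartan of the $i$-th factor to the other, taking $\varphi=\otimes_i\varphi_i$. Your opening observation (both Cartans come with tensor decompositions and $E(H)\cong E(K)$ via Steinitz numbers) is exactly the data from which this two-line argument proceeds; the back-and-forth machinery is needed only for the harder statement about general Cartan subalgebras defined intrinsically, which Theorem~\ref{Thm3} shows is in fact false for general Cartan subalgebras in the sense of ``general Cartan subalgebra''.
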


\begin{proof}
		Let $H'$, $H^{''}$ be Cartan subalgebras corresponding to tensor decompositions $$A\cong\otimes_{i=1}^{\infty}M_{n_i}(\mathbb{F}), \ A\cong\otimes_{i=1}^{\infty}M_{m_i}(\mathbb{F}), \ H'= \otimes_{i=1}^{\infty} H'_i, \ H^{''}= \otimes_{i=1}^{\infty}H^{''}_i,$$ where $H'_i$, $H^{''}_i$ are Cartan subalgebras in $M_{n_i}(\mathbb{F}),$ $M_{m_i}(\mathbb{F})$ respectively. Without loss of generality we can assume that all integers $n_i$, $m_i$ are prime. From $$\mathbf{st}(A)=\prod_{i=1}^{\infty} n_i=\prod_{i=1}^{\infty} m_i$$ it follows that up to renumeration   we can assume $n_i=m_i$. There exist automorphisms $\varphi _i \in \text{Aut } M_{n_i}(\mathbb{F}) $ such that
	$\varphi _i( H'_i)=H^{''}_i$. Now  $H'$ and  $H^{''}$ are conjugate via the automorphism $\varphi= \otimes_{i=1}^{\infty}\varphi _i$.
\end{proof}

It easy to see that a Cartan subalgebra is a general Cartan subalgebra. The reverse statement is not true. In particular, not all general Cartan subalgebras are conjugate.

\begin{theorem}
	\label{Thm3}
	In an arbitrary countable dimensional locally matrix algebra there exists a general Cartan subalgebra that is not a Cartan subalgebra.
\end{theorem}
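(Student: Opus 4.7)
The plan is to construct a general Cartan subalgebra $H$ whose normalizer in $A^{\times}$ coincides with $H^{\times}$ itself, making the ``Weyl group'' $N_A(H)^{\times}/H^{\times}$ trivial. On the other hand, every Cartan subalgebra will be shown to have a non-trivial Weyl group. Since any two Cartan subalgebras are conjugate via an automorphism of $A$ by Theorem~\ref{Thm2}, and the Weyl group is preserved under such conjugation, the two cannot match, so the constructed $H$ will be a general Cartan that is not a Cartan.

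For the construction I would fix a tower $A_1 \subset A_2 \subset \cdots$ of matrix subalgebras of $A$ with $A = \bigcup_k A_k$, $A_k \cong M_{n_k}(\mathbb{F})$, and $n_{k+1}/n_k \geq 2$. Using the double-centralizer decomposition $A_{k+1} = A_k \otimes C_k$ where $C_k := C_{A_{k+1}}(A_k) \cong M_{n_{k+1}/n_k}(\mathbb{F})$, I would inductively choose Cartan subalgebras $H_k \subset A_k$ with $H_{k-1} \subset H_k$. The centralizer of $H_k$ in $A_{k+1}$ equals $H_k \otimes C_k = \bigoplus_{i=1}^{n_k} e_i C_k$, where $e_1, \ldots, e_{n_k}$ are the atoms of $H_k$, and every Cartan of $A_{k+1}$ containing $H_k$ is of the form $\bigoplus_i e_i K_i$ for Cartans $K_i$ of $C_k$. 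Since $\mathbb{F}$ is algebraically closed and hence infinite, $C_k$ carries infinitely many Cartans, so I can pick $n_k$ pairwise distinct such $K_i$, defining $H_{k+1}$. Setting $H := \bigcup_k H_k$ produces a general Cartan by construction.

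To see $N_A(H)^{\times} = H^{\times}$, I would take any $g \in A^{\times}$ normalizing $H$ with $g \in A_m^{\times}$ for some $m$. Maximality of $H_k$ as an abelian subalgebra of $A_k$ gives $H \cap A_k = H_k$, so $g$ normalizes every $H_k$ for $k \geq m$. The permutation $\sigma$ of atoms of $H_m$ induced by $g$ must satisfy $K_i = K_{\sigma(i)}$ for all $i$: as $g \in A_m$ commutes with $C_m$, conjugation sends $\bigoplus_i e_i K_i$ to $\bigoplus_i e_{\sigma(i)} K_i$, and distinctness of the $K_i$ forces $\sigma = \mathrm{id}$, hence $g \in C_{A_m}(H_m) = H_m \subset H$. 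For the contrast, any Cartan $H_0 = \otimes_i L_i$ corresponding to a tensor decomposition $A = \otimes_i B_i$ admits a non-scalar permutation matrix $\pi$ in some factor $B_i$ that normalizes $L_i$; such a $\pi$ lies in $N_A(H_0)^{\times} \setminus H_0^{\times}$, since $B_i \cap H_0 = L_i$ and $\pi \notin L_i$, so the Weyl group of $H_0$ is non-trivial.

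The main obstacle is the verification that an element of $A_m$ normalizing $H_{m+1}$ cannot scramble the chosen Cartans $K_i$ across different blocks. This hinges on the observation that elements of $A_m$ act as the identity on the tensor complement $C_m$, so conjugation merely reindexes the summands of $\bigoplus_i e_i K_i$ without touching the $K_i$ themselves. Once this rigidity is in hand, the pairwise distinctness of the $K_i$ arranged in the construction gives triviality of the Weyl group, and the contradiction with Theorem~\ref{Thm2} is immediate.
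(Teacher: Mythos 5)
Your proposal is correct and follows essentially the same route as the paper: build the tower of Cartan subalgebras $H_{k+1}=\bigoplus_i e_i\otimes K_i$ with pairwise distinct Cartans $K_i$ of the relative centralizer, use that distinctness plus $H\cap A_k=H_k$ to show the resulting general Cartan subalgebra is normalized only by its own elements, and contrast this with the fact (the paper's Lemma \ref{Lem2}) that every Cartan subalgebra admits a normalizing unit outside itself. The only differences are cosmetic: the paper additionally treats finite base fields (imposing $n_k^2\le n_{k+1}$ to get enough distinct Cartans), which is unnecessary under the standing algebraic-closedness assumption you use, and your detour through Theorem \ref{Thm2} and invariance of the ``Weyl group'' is not needed, since the normalizer contrast alone already yields the conclusion.
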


Let $A^{*}$ denote the group of invertible elements of the algebra $A$.

\begin{lemma} \label{Lem2}
	If $H\subset A$ is a Cartan subalgebra then there exists an element $x \in A^{*} \setminus H$ such that $x^{-1}Hx=H$.
\end{lemma}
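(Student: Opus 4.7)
The plan is to construct $x$ from a normalizer element in a single finite-dimensional tensor factor. By definition write $A = \otimes_{i=1}^{\infty} A_i$ and $H = \otimes_{i=1}^{\infty} H_i$, with $A_i \cong M_{n_i}(\mathbb{F})$ and $H_i$ a Cartan subalgebra of $A_i$. After reordering we may assume $n_1 \geq 2$ (this is the only place the nondegeneracy of $A$ is used).

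Inside the matrix algebra $A_1$ fix matrix units $e^{(1)}_{ij}$ with $H_1 = \bigoplus_j \mathbb{F}\, e^{(1)}_{jj}$ and set
\[
x_1 = e^{(1)}_{12} + e^{(1)}_{21} + \sum_{j\geq 3} e^{(1)}_{jj}\in A_1.
\]
Then $x_1^2 = 1$, so $x_1 \in A_1^{\,*}$; conjugation by $x_1$ swaps $e^{(1)}_{11}$ with $e^{(1)}_{22}$ and fixes the remaining diagonal idempotents, so $x_1^{-1} H_1 x_1 = H_1$; and $x_1 \notin H_1$ because $e^{(1)}_{12}$ is off-diagonal.

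Next I take $x$ to be the image of $x_1$ under the canonical embedding $A_1 \hookrightarrow A$ as the first tensor factor; then $x \in A^{\,*}$ with inverse $x$ itself. Every $h \in H$ lies in a finite subalgebra $H_1 \otimes \cdots \otimes H_N$, and since $x$ is supported in the first tensor factor it commutes with every element of $A_j$ for $j \geq 2$; hence
\[
x^{-1}\bigl(h_1 \otimes h_2 \otimes \cdots \otimes h_N\bigr)x = (x_1^{-1} h_1 x_1) \otimes h_2 \otimes \cdots \otimes h_N,
\]
which lies in $H_1 \otimes H_2 \otimes \cdots \otimes H_N \subset H$. The reverse inclusion follows by the same argument applied to $x^{-1}$, so $x^{-1}Hx = H$.

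To finish I must show $x \notin H$. Since $x_1 \in A_1 \setminus H_1$, it suffices to show $A_1 \cap H = H_1$ inside $A$, which in turn reduces to the finite-dimensional identity
\[
(A_1 \otimes 1 \otimes \cdots \otimes 1) \cap (H_1 \otimes H_2 \otimes \cdots \otimes H_N) = H_1 \otimes 1 \otimes \cdots \otimes 1
\]
inside $A_1 \otimes \cdots \otimes A_N$. This is straightforward: pick an $\mathbb{F}$-basis of each $H_i$ containing $1_{A_i}$, extend it to a basis of $A_i$, and compare coefficients in the induced tensor basis. I expect this intersection identity to be the only mildly delicate point; everything else is a clean application of the tensor-product structure and the fact that a permutation matrix normalizes the diagonal subalgebra of a finite matrix algebra without belonging to it.
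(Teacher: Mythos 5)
Your proof is correct and follows essentially the same route as the paper's: split off one matrix tensor factor $M_n(\mathbb{F})$ with $n\geq 2$, take a permutation (transposition) matrix there, which normalizes the diagonal Cartan subalgebra without lying in it, and tensor it with the identity on the remaining factors. You merely spell out the details the paper leaves implicit (the explicit choice of $x_1$ and the verification that $x\notin H$, which could also be seen even faster from the failure of $x$ to commute with $e^{(1)}_{11}\otimes 1\in H$).
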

\begin{proof}
	Let $A=A_1 \otimes A_2$, $A_1 \cong M_{n}(\mathbb{F}) $, $n \ge 2$;
	$A_2$ is a locally matrix algebra. Let $H_1$, $H_2$ be Cartan subalgebras of the algebras $A_1$, $A_2$ respectively, $H=H_1 \otimes H_2$. There exists an element $a \in A_1^{*} \setminus H_1 $ such that $a^{-1}H_1 a=H_1$. Now it remains to choose $x=a \otimes 1$.
\end{proof}

\begin{proof}[Proof of Theorem \ref{Thm3}]
	Let $A$ be a countable dimensional locally matrix algebra. In view of Lemma \ref{Lem2} it is sufficient to find a general Cartan subalgebra $H$ such that for an arbitrary invertible element $x\in A^{*}$ either $x\in H$ or $x^{-1} H x \neq H.$
	
	Choose an ascending  chain of matrix subalgebras of algebra $A$ such that $$ 1\in A_1 \subset A_2 \subset \cdots , \ \cup_{i=1}^{\infty} A_i = A, \ A_i \cong M_{n_i} (\mathbb{F}), \  n_i\ge 2.  $$ In the case of a finite field $\mathbb{F}$ we assume also that $n_i^2 \leq n_{i+1}$. We will use induction to construct an ascending   chain of Cartan subalgebras $H_k \subset A_k,$ $ H_k \subset H_{k+1},$ $k\ge 1. $
	
	The Cartan subalgebra $H_1 \subset A_1$ is selected arbitrary. Suppose that Cartan subalgebras $H_1 \subset H_2 \subset \cdots \subset H_k$ have been selected.
	
	The algebra $H_k$ is isomorphic to a direct sum of $n_k$ copies of the field $\mathbb{F}.$ Let $e_1,$ $\ldots,$ $e_{n_k}$ be pairwise orthogonal idempotents, $H_k = \mathbb{F}e_1 \oplus \cdots \oplus \mathbb{F} e_{n_k}.$
	
	Let $A_k^{\,'}$ be the centraliser of the subalgebra $A_k$ in $A_{k+1} .$ Then $$A_k^{\,'} \cong M_{n_{k+1}/{n_k}}(\mathbb{F}) \text{ and } A_{k+1}\cong A_k \otimes_{\mathbb{F}} A_k^{\,'}.$$  If the field $\mathbb{F}$ is infinite then the algebra $A_k^{\,'}$ contains infinitely many distinct Cartan subalgebras. If the field $\mathbb{F}$ is finite then $A_k^{\,'}$ contains at least $n_k$ distinct Cartan subalgebras. In any case we choose distinct Cartan subalgebras $H_1^{\,'},$ $\ldots,$ $H_{n_k}^{\,'}$ of $A_k^{\,'}.$  	Let $$H_{k+1}= e_1\otimes H_1^{\,'} + \cdots + e_{n_k}\otimes H_{n_k}^{\,'}.$$  It is easy to see that $H_{k+1}$ is a Cartan subalgebra of $A_{k+1}.$ Since every $H_i^{\,'}$ contains the identity element of $A_k^{\,'}$ it follows that $H_k \subset H_{k+1}.$ It is also easy to see that  $H_{k+1}\cap A_k = H_k.$
	
	The union $H=\cup^{\infty}_{k=1} H_k$ is a general Cartan subalgebra of $A.$ For an arbitrary $k\geq1$ we have $H\cap A_k=H_k.$
	
	Let $x\in A^{*}$ be an invertible element. There exists $k\geq 1$ such that $x\in A_k.$ If $x\in H_k$ or $x^{-1}H_k x\neq H_k$ then we are done.
	
	Suppose that $x\in A_k \setminus H_k$ and $x^{-1}H_k x=H_k.$ Then $x^{-1}e_i x = e_{\pi(i)},$ where $\pi$ is a permutation on $1,2,\ldots,n_k.$ If $\pi = 1$ then $x$ lies in the centralizer of $H_k,$ hence $x\in H_k,$ which contradicts our assumption.
	Therefore $\pi \neq 1.$ Now $$x^{-1}H_{k+1}x=\Sigma_{i=1}^{n_k}e_{\pi(i)}\otimes H_i^{'}\neq H_{k+1}$$ since all Cartan subalgebras $H_1^{'},\ldots,H_{n_k}^{'}$ are distinct.
	This implies $x^{-1}H x\neq H$ and completes the proof of the Theorem.
\end{proof}
	
	\begin{theorem}
		\label{Thm4}
		\begin{enumerate}
			\item[$(1)$] An arbitrary countable locally standard Hamming space $S$ is isomorphic to $E(H) ,$ where $H$ is a  Cartan subalgebra of a countable dimensional locally matrix algebra $A,$ $\mathbf{st}(S)=\mathbf{st}(A).$
			\item[$(2)$] Let $A_1,$ $A_2$ be locally matrix algebras with Cartan subalgebras $H_1,$ $H_2$ respectively. The Hamming spaces $E(H_1),$  $E(H_2)$ are isomorphic if and only if $A_1 \cong A_2.$
		\end{enumerate}	
	\end{theorem}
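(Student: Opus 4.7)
The plan is to prove part (1) by an explicit tensor-product construction, and then to deduce part (2) from Theorem~\ref{Thm2}, Glimm's theorem, and the classification of countable locally standard Hamming spaces by their Steinitz numbers.

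For part (1), apply Theorem~\ref{TH_1} to fix a decomposition $S\cong\otimes_{i=1}^{\infty}H_{p_i}$ with each $p_i$ prime. Put $A=\otimes_{i=1}^{\infty}M_{p_i}(\mathbb{F})$; this is a countable dimensional locally matrix algebra with $\mathbf{st}(A)=\prod_{i=1}^{\infty}p_i=\mathbf{st}(S)$. In each factor $M_{p_i}(\mathbb{F})$ fix the diagonal Cartan subalgebra $D_i$ and set $H=\otimes_{i=1}^{\infty}D_i$; by the definition of Cartan subalgebra given just before Theorem~\ref{Thm2}, $H$ is a Cartan subalgebra of $A$. The idempotents of $D_i$ are indexed by $(0,1)$-tuples of length $p_i$, and a diagonal idempotent with $k$ ones has relative rang $k/p_i$; hence $E(D_i)\cong H_{p_i}$ as Hamming spaces. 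The key verification is that the passage from a commutative split semisimple unital subalgebra $C$ of a locally matrix algebra to the Hamming space $(E(C),r)$ commutes with tensor products: idempotents of $C_1\otimes C_2$ are Boolean sums of elementary tensors $e\otimes f$ with $e\in E(C_1)$, $f\in E(C_2)$, and the relative rang is multiplicative, $r(e\otimes f)=r(e)r(f)$. Uniqueness in the Proposition on tensor products of Hamming spaces (Section 4) then forces $E(C_1\otimes C_2)\cong E(C_1)\otimes E(C_2)$. Applying this inductively along the chain $D_1\subset D_1\otimes D_2\subset\cdots$ and passing to the union yields $E(H)\cong\otimes_{i=1}^{\infty}E(D_i)\cong\otimes_{i=1}^{\infty}H_{p_i}\cong S$, as required.

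For part (2), the direction $A_1\cong A_2\Rightarrow E(H_1)\cong E(H_2)$ is short: an isomorphism $\varphi\colon A_1\to A_2$ takes $H_1$ to a Cartan subalgebra $\varphi(H_1)$ of $A_2$, and Theorem~\ref{Thm2} furnishes $\psi\in\text{Aut}(A_2)$ with $\psi(\varphi(H_1))=H_2$. Since algebra isomorphisms preserve idempotents and the relative rang function, $\psi\circ\varphi$ restricts to a Hamming-space isomorphism $E(H_1)\cong E(H_2)$. For the converse, assume $E(H_1)\cong E(H_2)$. By the classification of countable locally standard Hamming spaces cited after Definition~\ref{Steinitz numbers}, the two spaces share the same Steinitz number. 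The computation in part (1) shows $\mathbf{st}(E(H_i))=\mathbf{st}(A_i)$ for $i=1,2$; hence $\mathbf{st}(A_1)=\mathbf{st}(A_2)$, and Glimm's theorem gives $A_1\cong A_2$.

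The main obstacle is obtaining a clean proof of the multiplicativity $r(e\otimes f)=r(e)r(f)$ for idempotents in $C_1\otimes C_2\subset A_1\otimes A_2$, computed relative to an enclosing finite dimensional matrix subalgebra. This reduces to the well-known fact that the rang of an idempotent $p\otimes q$ in $M_n(\mathbb{F})\otimes M_m(\mathbb{F})\cong M_{nm}(\mathbb{F})$ equals $\mathrm{rk}(p)\cdot\mathrm{rk}(q)$, but it must be phrased so as to survive the passage to the direct limit defining $A$. Once this multiplicativity is combined with the uniqueness part of the tensor-product Proposition and the Steinitz-number calculus already developed, the remainder of the argument is bookkeeping.
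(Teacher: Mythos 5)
Your proposal follows essentially the same route as the paper: part (1) is the identical construction $A=\otimes_{i=1}^{\infty}M_{p_i}(\mathbb{F})$ with diagonal Cartan subalgebras and the identification $E(\otimes_i D_i)\cong\otimes_i H_{p_i}$, and part (2) is the same Steinitz-number argument via Glimm's theorem. You merely spell out details the paper leaves implicit (the multiplicativity $r(e\otimes f)=r(e)r(f)$ behind $E(D)=\otimes_i E(D_i)$, and the ``only if'' direction of (2) via Theorem~\ref{Thm2}), and these verifications are correct.
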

	\begin{proof} By Theorem \ref{TH_1} $S\cong \otimes_{i=1}^{\infty} H_{p_i},$ where each $p_i$ is a prime number. Consider the corresponding matrix algebras $M_{p_i}(\mathbb{F})$ and their diagonal Cartan subalgebras $D_i.$ Let $$A=\otimes_{i=1}^{\infty} M_{p_i}(\mathbb{F}).$$ Then $D= \otimes_{i=1}^{\infty} D_{i}$ is a Cartan subalgebra of $A.$ We have $$E(D)=\otimes_{i=1}^{\infty} E(D_i) \cong \otimes_{i=1}^{\infty} H_{p_i} \cong S.$$ This proves the part (1).
		
		If $E(H_1)\cong E(H_2)$ then $\mathbf{st}(A_1)=\mathbf{st}(E(H_1))=\mathbf{st}(E(H_2))=\mathbf{st}(A_2).$ By Glimm's Theorem \cite{Glimm} $A_1\cong A_2.$ This completes the proof of the Theorem.
	\end{proof}

\end{document}